\author{Ingvar Ziemann\thanks{Correspondence: \texttt{ingvarz@seas.upenn.edu}}}
\affil{University of Pennsylvania}
\numberwithin{equation}{section}
\newcommand{\e}{\varepsilon}
\newcommand{\E}{\mathbf{E}}
\DeclareMathOperator{\tr}{tr}
\newcommand{\T}{\mathsf{T}}
\newcommand{\R}{\mathbb{R}}
\newcommand{\N}{\mathbb{N}}
\newtheorem{theorem}{Theorem} 
\newtheorem*{theorem*}{Theorem} 
\newtheorem{remark}{Remark}[section]
\newtheorem{lemma}{Lemma}
\date{}
\title{A Vector Bernstein Inequality for Self-Normalized Martingales}
\begin{document}

\maketitle

\vspace{-20pt}
\begin{abstract} 
We prove a Bernstein inequality for vector-valued self-normalized martingales. We first give an alternative perspective of the corresponding sub-Gaussian bound due to \cite{abbasi2011improved} via a PAC-Bayesian argument with Gaussian priors. By instantiating this argument to priors drawn uniformly over well-chosen ellipsoids, we obtain a Bernstein bound.
\end{abstract}

\section{Tail Bounds for Self-Normalized Martingales}
Deviation inequalities for self-normalized martingales play a key role in obtaining guarantees for linear regression in interactive and sequential decision-making tasks, such as learning an autoregression or regret minimization in linear bandits. The most prevalent version of such an inequality currently in use is due to \cite{abbasi2011improved} and rests on the method of pseudo-maximization popularized by \cite{pena2009self}, dating back to \cite{robbins1970boundary}. Comparing their result to the central limit theorem, their  bound is nearly optimal but depends on the sub-Gaussian variance proxy instead of the actual variance. In this note, we overcome this issue by casting the pseudo-maximization technique through the lens of the PAC-Bayesian inequality. The present approach simplifies classical pseudo-maximization  by relegating the complexity of evaluating exponential integrals against the smoothing distribution to the computation of a generic Kullback-Liebler divergence term. This allows us to generalize the argument of \cite{abbasi2011improved} but without access to globally defined moment generating function bounds.

Let us now proceed by describing the setting of our result and that of \cite{abbasi2011improved}. Fix a filtration $\mathcal{F}_{0:\infty}$ and two square-integrable processes: $X_{1:\infty}$, taking values in $\R^d$, and $W_{1:\infty}$, taking values in $\R$. For each $T\in \N$, let $X_{1:T}$ be adapted to $\mathcal{F}_{0:T-1}$ and $W_{1:T}$ to $\mathcal{F}_{1:T}$ with $\E[ W_k |\mathcal{F}_{k-1}]=0$ for every $k\in \N$. For $t\in \N$, we define:
\vspace{-8pt}
\begin{equation}\vspace{-8pt}
    S_t \triangleq \sum_{k=1}^t W_k X_k \quad \textnormal{and} \quad V_t  \triangleq \sum_{k=1}^t X_k X_k^\T
\end{equation}
and are interested in bounds on the random walk $S_T$ in the random Mahalanobis norm $\|S_T\|_{(V_T+\Gamma)^{-1}}^2 = S_T^\T (V_T+\Gamma)^{-1} S_T$ for some fixed positive semidefinite matrix $\Gamma \succeq 0$. This is called a \emph{self-normalized martingale}. Under the assumption that $W_k$ is $\mathcal{F}_{k-1}$-conditionally $\sigma_{\mathrm{subG}}^2$-sub-Gaussian for each $k \in \N$,\footnote{$\E \left[\exp\left(\lambda W_k\right)\big| \mathcal{F}_{k-1}\right] \leq \exp\left(\frac{\lambda^2\sigma_{\mathrm{subG}}^2}{2} \right), \: \forall k \in \N.$} \cite{abbasi2011improved} show that for every stopping time $\tau$  ($\in \mathcal{F}_{0:\infty}$) and with probability $1-\delta$:
\begin{equation}\label{eq:abbasiselfnorm}
    \|S_\tau\|_{(V_\tau+\Gamma)^{-1}}^2 \leq \sigma_{\mathrm{subG}}^2\times \left[ \log \left( \frac{\det(V_\tau+\Gamma)}{\det (\Gamma)}\right) +2\log(1/\delta) \right].
\end{equation}
Their approach rests on an elegant application of the pseudo-maximization technique developed by \cite{pena2009self}, dating back to \cite{robbins1970boundary}. While elegant, the result of \cite{abbasi2011improved} has one shortcoming as compared to classical asymptotics: the linear dependence on the conditional variance proxy $\sigma_{\mathrm{subG}}^2$ as opposed to the conditional variance, $\sigma_{\mathrm{var}}^2\triangleq  \sup \{  \E [W_k^2 | \mathcal{F}_{k-1} ] \vert \textnormal{ a.s., } k\in {T} \}$. The traditional fix to this in the literature on concentration inequalities is to invoke a Bernstein-type bound on the moment generating function (MGF) of the $W_k$ instead of a Hoeffding type of bound. Unfortunately, directly combining a Bernstein MGF bound with the pseudo-maximization technique does not lead to analytically tractable upper bounds as it requires the evaluation of an exponential integral over a bounded domain (an ellipsoid). Moreover, previous attempts at establishing Bernstein bounds (via methods orthogonal to pseudo-maximization) for vector-valued self-normalized martingales suffer from extraneous logarithmic dependencies and have significantly looser constants \citep[see for instance][]{zhao2023variance}.

In this note, we provide an alternative perspective on the proof of \eqref{eq:abbasiselfnorm}, where, instead of computing the exponential integral directly, we invoke the variational characterization of Kullback-Liebler divergence via the PAC-Bayesian lemma to relegate this difficulty to the calculation of said divergence. Beyond Gaussian priors, this turns out to be significantly simpler than the evaluation of an exponential integral. It allows us to modularize the proof strategy of \cite{abbasi2011improved} and replace their Gaussian prior with uniform ellipsoidal priors that are suitable in combination with exponential inequalities that only hold for a restricted domain (contrast this with Hoeffding MGF bounds being valid for all $\lambda$). 

The rest of this note is organized as follows. We state our main result immediately below and then proceed to discuss its consequences. Its proof is given in \Cref{proof:selfnormalizedbernstein}. Preliminaries relating to our application of the PAC-Bayesian lemma are given in \Cref{sec:pacbayes} where we also provide a proof of \eqref{eq:abbasiselfnorm} as a warm-up. Auxiliary lemmata are proven in \Cref{sec:aux}.

\vspace{-8pt}
\subsection{The Result}
\vspace{-2pt}

To apply a Bernstein MGF bound, we will require some additional boundedness assumptions. Namely, we posit that:
\begin{equation}\label{eq:boundedness}
   |W_k| \leq B_W \quad \textnormal{and}\quad  X_k X_k^\T \preceq B_X^2, \qquad \forall k \in \N
\end{equation}
for a positive scalar $B_W $ and a positive definite matrix $B_X$.

\begin{theorem}\label{thm:selfnormalizedbernstein}
    Fix $\delta,\e,\nu \in (0,1)$, a stopping time $\tau$ with respect to $\mathcal{F}_{0:\infty}$, a positive semidefinite matrix $\Gamma \succeq 0$, a positive definite matrix $V\succ 0$ and assume that \eqref{eq:boundedness} holds. Define 
    $$\alpha  \triangleq   \left(\frac{\sqrt{e}(1+\nu)\|S_\tau\|_{(V_\tau+\Gamma)^{-1} V (V_\tau+\Gamma)^{-1}}}{\nu \sqrt{d+2}} -1 \right)\vee 0.$$
    Then as long as  $V_\tau +\Gamma \succeq e(1+\nu)^2V\succeq (1+\nu)^2 \e^{-1} (d+2)  B_W^2 B_X^2$ we that with probability at least $1-\delta$:
    \begin{equation}\label{eq:selfnormbern}
        \|S_\tau\|_{(V_\tau+\Gamma)^{-1}}^2 \leq \left(\frac{(1+\alpha)^2}{1+2\alpha} \times \frac{1}{1-\e}\right) \times \sigma_{\mathrm{var}}^2\times \left[ \log \left(\frac{\det (V_\tau+\Gamma)}{ \det (V)} \right)+2\log(1/\delta)\right].
    \end{equation}
\end{theorem}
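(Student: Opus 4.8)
The plan is to reproduce the PAC-Bayesian architecture behind the warm-up proof of \eqref{eq:abbasiselfnorm}, but with a uniform ellipsoidal prior in place of the Gaussian one, so that the moment generating function bound only has to hold on a bounded set. First I would record a one-sided sub-gamma (Bernstein) MGF estimate for the bounded increments: using $\E[W_k\mid\mathcal{F}_{k-1}]=0$, $\E[W_k^2\mid\mathcal{F}_{k-1}]\le\sigma_{\mathrm{var}}^2$, $|W_k|\le B_W$, and the monotonicity of $x\mapsto 2(e^x-1-x)/x^2$ together with $2(e^{\e}-1-\e)/\e^2\le 1/(1-\e)$, one gets $\E[\exp(\lambda W_k)\mid\mathcal{F}_{k-1}]\le\exp(\lambda^2\sigma_{\mathrm{var}}^2/(2(1-\e)))$ whenever $|\lambda| B_W\le\e$. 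Taking $\lambda=\langle\theta,X_k\rangle$, this shows that for each fixed $\theta$ in a suitable ellipsoid the process $M_t(\theta)\triangleq\exp(\langle\theta,S_t\rangle-\tfrac{\sigma_{\mathrm{var}}^2}{2(1-\e)}\theta^\T V_t\theta)$ is a nonnegative supermartingale with $M_0(\theta)=1$.

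Next I would mix: let $P$ be the uniform law on $\calE_P\triangleq\{\theta:\theta^\T V\theta\le R^2\}$ and set $\bar M_t\triangleq\E_{\theta\sim P}[M_t(\theta)]$, which is again a nonnegative supermartingale by Tonelli, provided $M_t(\theta)$ is a supermartingale for \emph{every} $\theta\in\calE_P$. The spectral hypotheses on $V$ are exactly what force this uniform-in-$\theta$ validity: the second inequality $e(1+\nu)^2V\succeq(1+\nu)^2\e^{-1}(d+2)B_W^2B_X^2$ guarantees $|\langle\theta,X_k\rangle|B_W\le\e$ for all $\theta\in\calE_P$ once $R$ is chosen at the largest value compatible with the sub-gamma range. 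Ville's maximal inequality applied at the stopping time $\tau$ then gives $\Pr[\bar M_\tau\ge 1/\delta]\le\delta$, and on the complementary event the Donsker--Varadhan (PAC-Bayesian) lemma yields, for every posterior $Q$,
\begin{equation*}
\E_{\theta\sim Q}[\langle\theta,S_\tau\rangle]-\tfrac{\sigma_{\mathrm{var}}^2}{2(1-\e)}\E_{\theta\sim Q}[\theta^\T V_\tau\theta]\le \mathrm{KL}(Q\,\|\,P)+\log(1/\delta).
\end{equation*}

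I would then instantiate $Q$ as the uniform law on an ellipsoid of shape $V_\tau+\Gamma$ (a data-dependent, hence permissible, choice), centered at $s\,\hat\theta$ with $\hat\theta\triangleq(V_\tau+\Gamma)^{-1}S_\tau$ and radius $r$, contained in $\calE_P$. Two computations drive everything: (i) a uniform law on $\{(\theta-\mu)^\T A(\theta-\mu)\le r^2\}$ has mean $\mu$ and covariance $\frac{r^2}{d+2}A^{-1}$, so $\E_Q[\langle\theta,S_\tau\rangle]=s\,\|S_\tau\|_{(V_\tau+\Gamma)^{-1}}^2$ and $\E_Q[\theta^\T V_\tau\theta]=s^2\hat\theta^\T V_\tau\hat\theta+\frac{r^2}{d+2}\tr(V_\tau(V_\tau+\Gamma)^{-1})$; and (ii) for two uniform laws on nested ellipsoids $\calE_Q\subseteq\calE_P$ the divergence is the log-volume ratio $\mathrm{KL}(Q\,\|\,P)=\frac12\log\frac{\det(V_\tau+\Gamma)}{\det V}+d\log(R/r)$, which is where the target determinant term appears. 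Writing $Y=\|S_\tau\|_{(V_\tau+\Gamma)^{-1}}^2$ and using $\hat\theta^\T V_\tau\hat\theta\le Y$ and $\tr(V_\tau(V_\tau+\Gamma)^{-1})\le d$, the display reduces to a scalar inequality of the form $sY-\tfrac{\sigma_{\mathrm{var}}^2 s^2}{2(1-\e)}Y\le(\text{radius terms})+\tfrac12\log\tfrac{\det(V_\tau+\Gamma)}{\det V}+\log(1/\delta)$.

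The crux, and the step I expect to be most delicate, is the joint optimization over the center-scaling $s$ and the radius $r$ under containment. Converting $\calE_Q$ into the $V$-geometry via $V_\tau+\Gamma\succeq e(1+\nu)^2V$ bounds the $V$-radius of the centered posterior by $r/(\sqrt e(1+\nu))$, so containment reads $s\,\|\hat\theta\|_V+\tfrac{r}{\sqrt e(1+\nu)}\le R$, where $\|\hat\theta\|_V=\|S_\tau\|_{(V_\tau+\Gamma)^{-1}V(V_\tau+\Gamma)^{-1}}$ is precisely the quantity in $\alpha$ (note the $\sqrt e(1+\nu)$ and $\sqrt{d+2}$ appearing there). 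Eliminating $r$ from this relation and optimizing $s$ should produce the threshold defining $\alpha$ and collapse the radius-dependent terms into the multiplicative factor $\frac{(1+\alpha)^2}{1+2\alpha}$; when the signal $\|\hat\theta\|_V$ is small enough that the unconstrained optimum $s=\frac{1-\e}{\sigma_{\mathrm{var}}^2}$ is feasible, we recover $\alpha=0$ and factor $1$. I would carry out this optimization carefully, tracking how the $\frac{1}{d+2}$ covariance factor and the radius budget interact; the book-keeping I would watch most closely is verifying that the leftover radius and $d\log(R/r)$ contributions are genuinely absorbed into $\frac{(1+\alpha)^2}{1+2\alpha}$ rather than leaving an uncontrolled dimension-dependent additive term, after which \eqref{eq:selfnormbern} can be read off.
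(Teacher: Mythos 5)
Your proposal is correct and follows essentially the same route as the paper: a restricted-range Bernstein MGF bound (the paper's Lemma~\ref{lem:emartingale}), a uniform prior on a $V$-shaped ellipsoid and a uniform posterior on a $(V_\tau+\Gamma)$-shaped ellipsoid centered at a shrunken least-squares point, the log-volume-ratio KL and the $\frac{1}{d+2}$-covariance facts (Lemmas~\ref{lemma:volumelemma} and~\ref{lemma:covariancelemma}), and a containment condition that generates $\alpha$; your choices $s=\frac{1}{1+\alpha}$, $r^2=d+2$, $R^2=e^{-1}(d+2)$ reproduce the paper's $\Sigma_\rho=(d+2)(V_\tau+\Gamma)^{-1}$, $\Sigma_\pi=e^{-1}(d+2)V^{-1}$, with the radius term $\frac{d}{2}$ and $d\log(R/r)=-\frac{d}{2}$ cancelling exactly as you anticipate, and the factor $s-\frac{s^2}{2}=\frac{1+2\alpha}{2(1+\alpha)^2}$ matching \eqref{eq:selfnormbern}. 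The only deviations are cosmetic --- Ville's inequality on the mixed supermartingale instead of optional stopping inside Lemma~\ref{lem:emartingale} plus the Chernoff step in Lemma~\ref{lem:PACBAYES}, the triangle inequality in the $V$-norm instead of Young's inequality for containment, and the direct bound $\hat\theta^\T V_\tau\hat\theta\le\|S_\tau\|_{(V_\tau+\Gamma)^{-1}}^2$ instead of the identity \eqref{eq:reg-noreg-equiv} --- and your proposal even shares the paper's own slight $\e$-versus-$\e^2$ bookkeeping wrinkle in matching the spectral hypothesis to the MGF range.
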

\newpage
\paragraph{Remarks on \Cref{thm:selfnormalizedbernstein}:}

\begin{enumerate}[leftmargin=1.5em]
    \item When $\Gamma = 0$, requiring $\alpha=0$ in \Cref{thm:selfnormalizedbernstein}
    can be thought of as a burn-in requirement, restricting the Bernstein inequality
    to cases in which the corresponding least squares error is sufficiently small.  Typically, $\alpha=0$ once the sample size is large enough as $\|S_\tau\|_{V_\tau^{-2}}^2$ is the norm-squared error of the least squares estimator
    in the model $Y_k = \langle \theta_\star, X_k \rangle + W_k$ up to time $\tau$.   When $S_\tau$ is not too large, the variance proxy $\sigma_{\mathrm{subG}}^2$ in \eqref{eq:abbasiselfnorm}
    can thus be replaced by the variance term $\sigma_{\mathrm{var}}^2$ with just a little overhead in $\e$. 
    
    \item For $\Gamma \succeq \e^{-1} e^{-1}(d+2)  B_W^2 B_X^2$ (corresponding instead to ridge regression) one may choose $V=\Gamma$ and use \eqref{eq:abbasiselfnorm} to control $\alpha$ at the cost of an inflated failure probability ($\delta$ to $2\delta$).

    \item Together, $\e$ and $\nu$ control the sharpness of the multiplicative constants of the bound. In the large sample regime, these can often both be allowed to tend to $0$. 

    \item The bound can be put in a more convenient form by making use of either of the two numerical inequalities $(1+2\alpha)^{-1}(1+\alpha)^2 \leq (1+\alpha^2) \wedge (1+\frac{1}{2}\alpha)$.

    \item 
    Using \Cref{thm:selfnormalizedbernstein} instead of the result in \cite{abbasi2011improved}
    sharpens existing bounds for least squares estimators with martingale difference noise, allowing one to achieve optimal dependence in $\sigma_{\mathrm{var}}^2$. See e.g. the proof structure laid out in \citet[Section 1.2]{ziemann2023tutorial}. 

    \item
    These improvements also potentially extend to refining linear bandit analyses,
    previously motivating the results in \cite{abbasi2011improved} and \cite{zhao2023variance}.
\end{enumerate}

\section{PAC-Bayesian Bounds}
\label{sec:pacbayes}
Let $\rho$ and $\pi$ be two probability measures supported on a set $\Lambda \subset \R^d$. Recall that  $\mathrm{d}_{\mathrm{KL}}(\rho, \pi) \triangleq \int_{\Lambda} \log \left( \frac{d\rho}{d\pi} \right) d\rho \in [0, +\infty]$ is the Kullback-Leibler divergence between $\rho$ and $\pi$. Our analysis in the sequel rests on the well-known PAC-Bayesian lemma, stated below. 
\begin{lemma}[PAC-Bayesian deviation bound]\label{lem:PACBAYES}
Let $\Lambda$ be a subset of $\R^d$, and $Z(\lambda)$, $\lambda \in \Lambda$,
be a family of real-valued random variables. Assume that $\E[\exp Z(\lambda)] \leq 1$ for every $\lambda \in \Lambda$. Let $\pi$ be a probability distribution on $\Lambda$. Then for all $u \in [0,\infty):$
\begin{equation}
    \mathbf{P} \left( \forall \rho: \, \int_{\Lambda} Z(\lambda) \mathrm{d}\rho(\lambda) \leq \mathrm{d}_{\mathrm{KL}}(\rho, \pi) + u \right) \geq 1 - e^{-u}, 
\end{equation}
where $\rho$ spans all probability measures on $\Lambda$.
\end{lemma}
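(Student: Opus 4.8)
The plan is to deduce the result from three standard ingredients: the Gibbs variational (Donsker--Varadhan) representation of the Kullback--Leibler divergence, Tonelli's theorem, and Markov's inequality. The crucial observation is that the quantifier ``$\forall \rho$'' can be handled \emph{deterministically} (pointwise in the sample space), so that the only probabilistic step is a single scalar tail bound that does not reference $\rho$ at all.

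First, I would recall the Donsker--Varadhan inequality: for any probability measure $\pi$ on $\Lambda$ and any measurable $g:\Lambda \to \R$, every probability measure $\rho$ on $\Lambda$ satisfies
\begin{equation}
    \int_\Lambda g(\lambda)\, \mathrm{d}\rho(\lambda) \leq \mathrm{d}_{\mathrm{KL}}(\rho,\pi) + \log \int_\Lambda e^{g(\lambda)}\, \mathrm{d}\pi(\lambda),
\end{equation}
with the usual convention that $\mathrm{d}_{\mathrm{KL}}(\rho,\pi)=+\infty$ when $\rho \not\ll \pi$ (in which case the bound is vacuous). Applying this with $g(\cdot) = Z(\cdot)(\omega)$ for a fixed outcome $\omega$, and defining the nonnegative random variable $M \triangleq \int_\Lambda e^{Z(\lambda)}\, \mathrm{d}\pi(\lambda)$, I obtain that for \emph{every} $\rho$ simultaneously,
\begin{equation}
    \int_\Lambda Z(\lambda)\, \mathrm{d}\rho(\lambda) \leq \mathrm{d}_{\mathrm{KL}}(\rho,\pi) + \log M.
\end{equation}
Since the right-hand upper bound $\log M$ is free of $\rho$, the event $\{\log M \leq u\} = \{M \leq e^u\}$ is contained in the target event $\{\forall \rho:\ \int Z\,\mathrm{d}\rho \leq \mathrm{d}_{\mathrm{KL}}(\rho,\pi) + u\}$. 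Thus it suffices to show that $\Pr(M > e^u) \leq e^{-u}$.

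Next I would control $\E[M]$. Because $e^{Z(\lambda)} \geq 0$, Tonelli's theorem lets me exchange expectation and integration against $\pi$, and the hypothesis $\E[\exp Z(\lambda)] \leq 1$ for every $\lambda$ then yields
\begin{equation}
    \E[M] = \int_\Lambda \E\!\left[ e^{Z(\lambda)} \right] \mathrm{d}\pi(\lambda) \leq \int_\Lambda 1 \, \mathrm{d}\pi(\lambda) = 1.
\end{equation}
Markov's inequality applied to the nonnegative $M$ then gives $\Pr(M \geq e^u) \leq e^{-u}\E[M] \leq e^{-u}$, and combining with the containment above closes the argument.

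The main technical obstacle is not analytic but measure-theoretic bookkeeping: one must ensure that $M$ is a genuine (measurable) random variable, which requires $Z(\lambda)(\omega)$ to be jointly measurable in $(\lambda,\omega)$ so that Tonelli applies and $\E[M]$ is well defined; this is implicit in the setup and I would simply note it. A secondary point worth stating explicitly is why the uncountable quantifier over $\rho$ costs nothing: the Donsker--Varadhan bound holds as an almost-sure pointwise inequality for all $\rho$ at once, so no union bound over the (uncountable) family of posteriors is needed, and the entire probabilistic burden reduces to the single-variable Markov bound on $M$.
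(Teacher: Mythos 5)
Your proof is correct and follows essentially the same route as the paper's: both rest on the Donsker--Varadhan variational characterization of $\mathrm{d}_{\mathrm{KL}}$, an application of Fubini/Tonelli to get $\E\int_\Lambda e^{Z(\lambda)}\,\mathrm{d}\pi(\lambda) \leq 1$, and a Markov/Chernoff bound; the paper merely phrases the last step as a Chernoff bound on $\sup_\rho\{\int Z\,\mathrm{d}\rho - \mathrm{d}_{\mathrm{KL}}(\rho,\pi)\}$ rather than on $M$ directly, which is the same inequality. Your explicit remarks on joint measurability and on why the uncountable quantifier over $\rho$ is handled pointwise are sound additions, not deviations.
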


We will instantiate the PAC-Bayesian lemma with $Z(\lambda)$ as the quadratic form (and with $t=\tau)$ 
\begin{equation}\label{eq:the_self_normalized_process}
    Z(\lambda) = \langle \lambda, S_t \rangle -\frac{1}{2}\| \lambda\|_{V_t}^2 = \frac{1}{2}\|S_t\|_{V_t^{-1}}^2
    -\frac{1}{2}\|\lambda -V_t^{-1}S_t\|_{V_t}^2.
\end{equation}
To account for the fact that the Mahalanobis norm appearing in \eqref{eq:abbasiselfnorm} includes an additive factor $\Gamma$, let us also take note of the identity
\begin{equation}\label{eq:reg-noreg-equiv}
    \|S_t\|_{V_t^{-1}}^2
    -\|\lambda -V_t^{-1}S_t\|_{V_t}^2= \|S_t\|_{(V_t+\Gamma)^{-1}}^2
    -\|\lambda -(V_t+\Gamma)^{-1}S_t\|_{V_t+\Gamma}^2+\|\lambda\|_\Gamma^2.
\end{equation}
In particular, we seek to bound the RHS of \eqref{eq:the_self_normalized_process} (or \eqref{eq:reg-noreg-equiv}) but will use the LHS to establish an exponential inequality. In the two sections that follow we first show how to recover the results of \cite{abbasi2011improved} featuring the variance proxy $\sigma_{\mathrm{subG}}^2$ and then proceed to generalize these to variance sensitive Bernstein bounds depending on $\sigma_{\mathrm{var}}^2$ in the leading order.


\subsection{Warm-up: Sub-Gaussian Deviation Bounds}

Before we prove our main result, let us explain how a version of the result of \cite{abbasi2011improved} can be established via \Cref{lem:PACBAYES}. This will inform the proof strategy of our result by essentially replacing Gaussians with a certain covariance ellipsoid with uniform distributions over the same ellipsoid. We prove the result for $\sigma_{\mathrm{subG}}^2=1$ and note that the general case follows by rescaling.

By making use of the identity \eqref{eq:the_self_normalized_process}, it is easy to see that the right hand side of \eqref{eq:reg-noreg-equiv} satisfies the exponential inequality required for \Cref{lem:PACBAYES}. Namely, the tower rule and the conditional sub-Gaussianity of $\{W_k, k \geq 1\}$ implies that $\E \exp \left( \langle \lambda, S_T \rangle -\frac{1}{2}\| \lambda\|_{V_T}^2\right) \leq 1$ for all $\lambda \in \R^d$ and $T\in \N$. Hence, we may pick $\rho = \mathsf{N}\left( (V_T+\Gamma)^{-1} S_T, \Sigma_\rho\right)$ in \Cref{lem:PACBAYES}. We have:
\begin{multline}
  \frac{1}{2}\int \left[\|S_T\|_{(V_T+\Gamma)^{-1}}^2-  \|\lambda -(V_T+\Gamma)^{-1}S_T\|_{V_T+\Gamma}^2+\|\lambda\|_\Gamma^2\right] \mathrm{d}\rho(\lambda)
  \\
  =  \frac{1}{2}\left[\|S_T\|_{(V_T+\Gamma)^{-1}}^2 - \tr ((V_T+\Gamma) \Sigma_\rho)+
  \tr\left(\Gamma\Sigma_\rho+\Gamma(V_T+\Gamma)^{-1}S_TS_T^\T (V_T+\Gamma)^{-1}\right) 
  \right]\\
  =\frac{1}{2}\left[\|S_T\|_{(V_T+\Gamma)^{-1}}^2 - \tr (V_T \Sigma_\rho)+\|S_T\|_{(V_T+\Gamma)^{-1}\Gamma (V_T+\Gamma)^{-1}}^2  
  \right]
\end{multline}
Moreover, if we now set $\pi= \mathsf{N}(0, \Sigma_\pi)$ we have that:
\begin{equation}
    \mathrm{d}_{\mathrm{KL}}(\rho, \pi) = \frac{1}{2}\left[ \tr (\Sigma_\pi^{-1}\Sigma_\rho-I) + \|S_T\|_{(V_T+\Gamma)^{-1}\Sigma_\pi^{-1} (V_T+\Gamma)^{-1}}^2 + \log\frac{\det \Sigma_\pi}{\det \Sigma_\rho} \right]
\end{equation}
We point out that there is an asymmetry between $\Sigma_\rho$ and $\Sigma_\pi$ at this point. The first is allowed to depend on the processes $S_T,V_T$ but the latter is not. Applying \Cref{lem:PACBAYES} to the process \eqref{eq:the_self_normalized_process} yields that with probability at least $1-e^{-u}$:
\begin{multline}
\frac{1}{2}\left[\|S_T\|_{(V_T+\Gamma)^{-1}}^2 - \tr (V_T \Sigma_\rho)+\|S_T\|_{(V_T+\Gamma)^{-1}\Gamma (V_T+\Gamma)^{-1}}^2  
  \right]
  \\
  \leq \frac{1}{2}\left[ \tr (\Sigma_\pi^{-1}\Sigma_\rho-I) + \|S_T\|_{(V_T+\Gamma)^{-1}\Sigma_\pi^{-1} (V_T+\Gamma)^{-1}}^2 +\log\frac{\det \Sigma_\pi}{\det \Sigma_\rho} \right]+u.   
\end{multline}
Equivalently:
\begin{multline}
\|S_T\|_{(V_T+\Gamma)^{-1}}^2+\|S_T\|_{(V_T+\Gamma)^{-1}\Gamma (V_T+\Gamma)^{-1}}^2 -\|S_T\|_{(V_T+\Gamma)^{-1}\Sigma_\pi^{-1} (V_T+\Gamma)^{-1}}^2\\
\leq  \tr (\Sigma_\pi^{-1}\Sigma_\rho+V_T \Sigma_\rho-I) + \log\frac{\det \Sigma_\pi}{\det \Sigma_\rho}  +2u.
\end{multline}
Hence it makes sense to choose $\Sigma_\rho = (V_T+\Gamma)^{-1}$ and $\Sigma_\pi=\Gamma^{-1}$ giving:
\begin{equation}
\|S_T\|_{(V_T+\Gamma)^{-1}}^2\leq\log \frac{\det(V_T+\Gamma)}{\det (\Gamma)} +2u
\end{equation}
which is identical to the result of \cite{abbasi2011improved} (modulo the stopping time, which can easily be addressed---see below).

\begin{remark}
    By directly applying this proof strategy to \eqref{eq:the_self_normalized_process} one may also obtain the following deviation bound with probability at least $1-e^{-u}$:
    \begin{equation}
\|S_T\|_{V_T^{-1}}^2-\|S_T\|_{V_T^{-1}\Gamma V_T^{-1}}^2\leq\log \frac{\det (V_T)}{\det (\Gamma)} +2u.
\end{equation}
\end{remark}

\subsection{Variance Sensitive Deviation Bounds: Proof of \Cref{thm:selfnormalizedbernstein}}
\label{proof:selfnormalizedbernstein}

The use of Gaussian distributions in the self-normalized martingale bound is very convenient as it admits closed form KL expressions. However, their use hinges on the fact that an exponential inequality holds throughout $\R^d$. We now show how to obtain a similar bound using distributions with compact support.  In the sequel, we assume that $\sigma_{\mathrm{var},\e}^2=1$ and note that the general result can be recovered by rescaling $S_\tau$.

Let us now consider two ellipsoidal distributions. We construct them as follows. Fix two positive definite matrices $\Sigma_\pi$ and $\Sigma_\rho$ to be determined momentarily and a measurable weight factor $\alpha \in [0,\infty)$.  First,  let $\pi$ be uniform over the ellipsoid centered at zero and with shape $\Sigma_\pi$, i.e., uniform over $\{ x \in \R^d : x^\T\Sigma_\pi^{-1} x \leq 1 \}$. Second, let  $\rho$ be uniform over the ellipsoid with center $\frac{1}{1+\alpha}(V_\tau+\Gamma)^{-1}S_\tau$ and shape $\Sigma_\rho$. Note that we must choose $\alpha$ such that  $\frac{1}{1+\alpha}(V_\tau+\Gamma)^{-1}S_\tau +\{ x \in \R^d : x^\T\Sigma_\rho^{-1} x \leq 1 \} \subset \{ x \in \R^d : x^\T\Sigma_\pi^{-1} x \leq 1 \}$. Momentarily leaving this point aside, it is easy to see that on the event that the second ellipsoid is contained in the first, the KL divergence between these two distributions is the logarithmic volume ratio (\Cref{lemma:volumelemma}):
\begin{equation} 
  \rho-\textnormal{Ellipsoid} \subset   \pi-\textnormal{Ellipsoid} \quad  \Longrightarrow  \quad \mathrm{d}_{\mathrm{KL}}(\rho, \pi) = \frac{1}{2}\log \frac{\det \Sigma_\pi}{ \det \Sigma_\rho}.
\end{equation}
Moreover, using \Cref{lemma:covariancelemma}, on the same event we have that
\begin{multline}
    \frac{1}{2}\int  \left[   \|S_\tau\|_{(V_\tau+\Gamma)^{-1}}^2-\|\lambda -(V_\tau+\Gamma)^{-1}S_\tau\|_{V_\tau+\Gamma}^2 +\|\lambda\|_\Gamma^2\right]\mathrm{d}\rho(\lambda) \\
    = \frac{1}{2}\left[\frac{1+2\alpha}{(1+\alpha)^2}\|S_\tau\|_{(V_\tau+\Gamma)^{-1}}^2-\frac{1}{ d+2}\tr (V_\tau \Sigma_\rho)+\frac{1}{(1+\alpha)^2}\|S_\tau\|_{(V_\tau+\Gamma)^{-1}\Gamma (V_\tau+\Gamma)^{-1}}^2\right]
\end{multline}
In other words, combined with $\|S_\tau\|_{(V_\tau+\Gamma)^{-1}\Gamma (V_\tau+\Gamma)^{-1}}\geq 0$, the PAC-Bayesian bound in \Cref{lem:PACBAYES}, justified by the exponential inequality in \Cref{lem:emartingale}, yields that with probability $1-e^{-u}$:
\begin{equation}\label{eq:pacbayesused}
\frac{1+2\alpha}{(1+\alpha)^2}\|S_\tau\|_{(V_\tau+\Gamma)^{-1}}^2 \leq 2u+\frac{1}{d+2}\tr (V_\tau \Sigma_\rho) + \log \frac{\det \Sigma_\pi}{ \det \Sigma_\rho}.
\end{equation}
In particular, we may choose $\Sigma_\rho =  (d+2) (V_\tau+\Gamma)^{-1}$ and $\Sigma_\pi = e^{-1}(d+2)V^{-1}$ to obtain:
\begin{equation}\label{eq:ellipsessubstituted}
\frac{1+2\alpha}{(1+\alpha)^2}\|S_\tau\|_{(V_\tau+\Gamma)^{-1}}^2 \leq 2u+ \log \frac{\det (V_\tau+\Gamma)}{ \det (V)}.
\end{equation}

We note that \eqref{eq:pacbayesused}-\eqref{eq:ellipsessubstituted} only hold when  the event $ \{\rho-\textnormal{Ellipsoid} \subset   \pi-\textnormal{Ellipsoid}\}$ occurs, which in itself is contingent on our choice of $\alpha$. First, note that with our choice of priors, the use of \Cref{lem:emartingale} requires the additional constraint $V\succeq \e^{-1} e^{-1}(d+2)  B_W^2 B_X^2$. To conclude the proof it remains to verify that a good choice of $\alpha$ can be made. The required event occurs  precisely when
\begin{equation}
   \frac{1}{1+\alpha} (V_\tau+\Gamma)^{-1}S_\tau +\{x \in \R^d : x^\T (V_\tau+\Gamma) x \leq d+2  \} \subset \{ x \in \R^d : x^\T V x \leq e^{-1}(d+2)\}.
\end{equation}
If $V_\tau +\Gamma \succeq  (1+\nu)^2eV$ it suffices 
\begin{equation}
    \left(\frac{\sqrt{d+2}}{1+\nu}+\frac{1}{1+\alpha}\|S_\tau\|_{(V_\tau+\Gamma)^{-1} V (V_\tau+\Gamma)^{-1}} \right)^2\leq e^{-1}(d+2).
\end{equation}
To see this, let $y = \frac{1}{1+\alpha} (V_\tau+\Gamma)^{-1}S_\tau $ and note that we must show that $(y+x)^\T V(y+x) \leq e^{-1} (d+2)$ for every $x\in \R^d$ satisfying  $x^\T (V_\tau+\Gamma) x \leq d+2$. Now we have that for every $\mu>0$:
\begin{equation}
\begin{aligned}
    (y+x)^\T V(y+x) & \leq (1+\mu) x^\T V x + (1+\mu^{-1})y^\T V y \\
    &\leq \frac{(1+\mu)(d+2)}{ e (1+\nu)^2  } + (1+\mu^{-1})y^\T V y && \quad (V \preceq e^{-1}(1+\nu)^{-2}(V_\tau+\Gamma) )\\
    & =\left( \frac{\sqrt{d+2}}{\sqrt{e}(1+\nu)} +\sqrt{y^\T V y}\right)^2 &&\quad \left(\mu = \sqrt{\frac{ e (1+\nu)^2 y^\T Vy }{d+2}}\right)
\end{aligned}
\end{equation}
by applying Young's inequality and optimizing the weight $\mu$ as above. Moreover, one can verify that the above inequality holds with $\displaystyle\alpha  =  \left(\frac{\sqrt{e}(1+\nu)\|S_\tau\|_{(V_\tau+\Gamma)^{-1} V (V_\tau+\Gamma)^{-1}}}{\nu \sqrt{d+2}} -1 \right)  \vee 0$, so that our priors are indeed well defined for this choice. Hence, under the imposed constraints on $V,\e$ and $\nu$ we have thus obtained that
\begin{equation}
\frac{1+2\alpha}{(1+\alpha)^2} \|S_\tau\|_{(V_\tau+\Gamma)^{-1}}^2 \leq 
        2u+ \log \frac{\det (V_\tau+\Gamma)}{ \det (V)}. 
\end{equation}
This finishes the proof. \hfill $\blacksquare$


\section{Auxiliary Results}
\label{sec:aux}

\begin{lemma}\label{lem:emartingale}
Impose \eqref{eq:boundedness}, fix a stopping time $\tau$ with respect to $\mathcal{F}_{0:\infty}$ and  $\e \in (0,1)$. We have that
\begin{equation}
    \E \exp\left( \langle \lambda, S_\tau \rangle - \frac{\sigma_{\mathrm{var},\e}^2 \|\lambda\|_{V_\tau}^2}{2} \right) \leq 1
\end{equation}
for every $\lambda \in \R^d$ satisfying $\|\lambda\|^2_{B_X^2 B_W^2} \leq \e^2$ and where $\sigma_{\mathrm{var},\e}^2\triangleq (1-\e)^{-1}\sigma_{\mathrm{var}}^2$.
\end{lemma}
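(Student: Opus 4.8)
The plan is to exhibit the stated quantity as the terminal value of a nonnegative supermartingale and then invoke optional stopping. Concretely, fix $\lambda$ with $\|\lambda\|^2_{B_X^2 B_W^2}\le\e^2$ and set $M_t \triangleq \exp\!\big(\langle\lambda,S_t\rangle - \tfrac12\sigma_{\mathrm{var},\e}^2\|\lambda\|_{V_t}^2\big)$ with $M_0 = 1$. Writing $a_t\triangleq\langle\lambda,X_t\rangle$, which is $\mathcal{F}_{t-1}$-measurable since $X_t$ is predictable, the increments factorize as
\[
M_t = M_{t-1}\exp\!\left(W_t a_t - \tfrac12\sigma_{\mathrm{var},\e}^2 a_t^2\right),
\]
so that the supermartingale property $\E[M_t\mid\mathcal{F}_{t-1}]\le M_{t-1}$ is equivalent to the per-step conditional MGF bound $\E[\exp(W_t a_t)\mid\mathcal{F}_{t-1}]\le\exp(\tfrac12\sigma_{\mathrm{var},\e}^2 a_t^2)$. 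Granting this bound, $M_t$ is a nonnegative supermartingale with $\E M_0 = 1$, and the stopping-time conclusion $\E M_\tau\le1$ follows from optional stopping for nonnegative supermartingales (equivalently, Fatou applied to the stopped process $M_{t\wedge\tau}$, whose almost sure limit handles the event $\{\tau=\infty\}$).

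I expect this MGF bound to be the crux, as it is where both the domain restriction and the precise constant $\sigma_{\mathrm{var},\e}^2=(1-\e)^{-1}\sigma_{\mathrm{var}}^2$ enter. The first step is to convert the constraint into a pointwise bound on $Y_t\triangleq W_t a_t$: using $|W_t|\le B_W$ together with $X_tX_t^\T\preceq B_X^2$ gives $a_t^2 = \lambda^\T X_tX_t^\T\lambda \le \lambda^\T B_X^2\lambda$, hence $|Y_t|\le\sqrt{\lambda^\T B_W^2 B_X^2\lambda} = \|\lambda\|_{B_X^2 B_W^2}\le\e<1$ almost surely. Next I would apply the elementary inequality $e^y\le 1+y+\tfrac12 g(\e)y^2$, valid for every $y\le\e$ with $g(\e)\triangleq 2(e^\e-1-\e)/\e^2$, which follows from the monotonicity of $y\mapsto (e^y-1-y)/y^2$. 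Taking $\E[\,\cdot\mid\mathcal{F}_{t-1}]$ and using $\E[Y_t\mid\mathcal{F}_{t-1}]=0$, $\E[Y_t^2\mid\mathcal{F}_{t-1}]=a_t^2\,\E[W_t^2\mid\mathcal{F}_{t-1}]\le a_t^2\sigma_{\mathrm{var}}^2$, and $1+x\le e^x$, yields $\E[\exp(Y_t)\mid\mathcal{F}_{t-1}]\le\exp(\tfrac12 g(\e)a_t^2\sigma_{\mathrm{var}}^2)$.

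To close the supermartingale bound it then remains to verify the numerical inequality $g(\e)\le(1-\e)^{-1}$ for $\e\in(0,1)$, which is exactly engineered to match $\sigma_{\mathrm{var},\e}^2=(1-\e)^{-1}\sigma_{\mathrm{var}}^2$. This can be checked cleanly by a power-series computation: one finds
\[
\e^2 - 2(1-\e)(e^\e-1-\e) = \sum_{n\ge3}\frac{2(n-1)}{n!}\,\e^n \ge 0,
\]
so that $g(\e)=2(e^\e-1-\e)/\e^2\le(1-\e)^{-1}$ for all $\e\in(0,1)$, with asymptotic equality as $\e\to0$ (reflecting that the bound recovers the true conditional variance in the small-$\e$ limit). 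Combining this with the previous display gives $\E[\exp(Y_t)\mid\mathcal{F}_{t-1}]\le\exp(\tfrac12\sigma_{\mathrm{var},\e}^2 a_t^2)$, establishing the supermartingale property and hence, via optional stopping, the claim.
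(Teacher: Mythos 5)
Your proof is correct, and its overall skeleton is the same as the paper's: a per-step conditional MGF bound, the multiplicative supermartingale factorization of $M_t$, and optional stopping plus Fatou applied to $M_{\tau\wedge T}$ to handle the stopping time (including $\{\tau=\infty\}$). Where you genuinely diverge is in how the crux per-step bound is obtained. The paper simply invokes Bernstein's moment inequality conditionally on $\mathcal{F}_{k-1}$, applied to the bounded noise $W_k$ with the domain restriction $|\langle\lambda,X_k\rangle|B_W\leq\e$, which produces the denominator $2(1-\e)$ directly; implicitly this route controls all higher conditional moments via $\E[|W_k|^j\,|\,\mathcal{F}_{k-1}]\leq\sigma_{\mathrm{var}}^2B_W^{j-2}$. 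You instead bound the \emph{entire increment} $|Y_t|=|\langle\lambda,X_t\rangle W_t|\leq\e$ pointwise and apply the Bennett-type inequality $e^y\leq1+y+h(\e)y^2$ with $h(\e)=(e^\e-1-\e)/\e^2$ (valid for $y\leq\e$ by monotonicity of $y\mapsto(e^y-1-y)/y^2$), then close the gap with the numerical verification $2h(\e)\leq(1-\e)^{-1}$, whose power-series justification $\e^2-2(1-\e)(e^\e-1-\e)=\sum_{n\geq3}\tfrac{2(n-1)}{n!}\e^n\geq0$ is correct. The trade-off: the paper's appeal to a standard Bernstein MGF bound is shorter, while your argument is fully self-contained, needs only a conditional second-moment bound on the increment $Y_t$ rather than a moment hierarchy for $W_k$, and in fact shows the lemma holds with the slightly sharper variance inflation factor $2(e^\e-1-\e)/\e^2$ in place of $(1-\e)^{-1}$, which you then relax to match the statement. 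Under the hypothesis $\|\lambda\|^2_{B_X^2B_W^2}\leq\e^2$ the two domain restrictions coincide, so nothing is lost in generality either way.
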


\begin{proof}
    
Applying Bernstein's moment inequality conditionally on $\mathcal{F}_{k-1}$ yields that 
\begin{equation}
    \E_{k-1} [ e^{\langle \lambda , k_t\rangle W_k}  ] \leq \exp \left( \frac{ \|\lambda\|_{X_kX_k^\T}^2\sigma_{\mathrm{var}}^2  }{2(1-|\langle \lambda ,X_k\rangle|  B_W)} \right)
\end{equation}
as long as $ \lambda^\T X_k X_k^\T \lambda  =|\langle \lambda ,X_k\rangle|^2   <B_W^{-2}$. Since $ X_k X_k^\T \preceq B_X^2$, this holds deterministically as long as $\|\lambda\|^2_{B_W^2 B_X^2} <1$. In particular, if we fix $\e \in (0,1)$ and impose $\|\lambda\|^2_{B_X^2 B_W^2} \leq \e^2$ we find that
\begin{equation}
    \E_{k-1} [ e^{\langle \lambda , X_k\rangle W_k}  ] \leq \exp \left( \frac{ \|\lambda\|_{X_kX_k^\T}^2\sigma_{\mathrm{var}}^2  }{2(1-\e)} \right)
\end{equation}
Applying the tower property repeatedly, it thus follows that with $\sigma_{\mathrm{var},\e}^2= (1-\e)^{-1}\sigma_{\mathrm{var}}^2$ we have that
\begin{equation}
    \E \exp\left( \langle \lambda, S_t \rangle - \frac{\sigma_{\mathrm{var},\e}^2 \|\lambda\|_{V_t}^2}{2} \right) \leq 1
\end{equation}
for every $t$ and for every $\lambda$ satisfying $\|\lambda\|^2_{B_X^2 B_W^2} \leq \e^2$. 

To prove the result for $\tau$ a stopping time, define the (by the calculations above) nonnegative supermartingale $M_t \triangleq  \exp\left( \langle \lambda, S_t \rangle - \frac{\sigma_{\mathrm{var},\e}^2 \|\lambda\|_{V_t}^2}{2} \right)$. It follows by standard optional stopping arguments and Fatou's Lemma that $M_\tau = \liminf_{T\to \infty} M_{\tau \wedge T}$ also is a nonnegative supermartingale. In particular $\E M_\tau \leq 1$ as per requirement.
\end{proof}

\begin{lemma}\label{lemma:covariancelemma}
    Fix $\Sigma\in \R^{d\times d},\Sigma \succ 0$ and let $U$ be uniformly distributed over $\{ x\in \R^d \vert x^\T \Sigma^{-1} x \leq 1 \}$. Then $\E UU^\T = \frac{1}{d+2} \Sigma$.
\end{lemma}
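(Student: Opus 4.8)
The plan is to reduce to the Euclidean unit ball by an affine change of variables and then exploit rotational symmetry. Write $\Sigma = \Sigma^{1/2}\Sigma^{1/2}$ with $\Sigma^{1/2}\succ 0$ its symmetric positive definite square root. The linear map $v \mapsto \Sigma^{1/2}v$ is a bijection from the unit ball $B \triangleq \{v \in \R^d : \|v\|^2 \leq 1\}$ onto the ellipsoid $\{x \in \R^d : x^\T \Sigma^{-1}x \leq 1\}$, since $x = \Sigma^{1/2}v$ satisfies $x^\T\Sigma^{-1}x = v^\T v = \|v\|^2$. Because this map is linear, its Jacobian $\det\Sigma^{1/2}$ is constant, so it pushes the uniform law on $B$ forward to the uniform law on the ellipsoid. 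Hence, if $U$ is uniform over the ellipsoid and $V$ is uniform over $B$, we may take $U = \Sigma^{1/2}V$ in distribution, giving $\E\, UU^\T = \Sigma^{1/2}\,\E[VV^\T]\,\Sigma^{1/2}$. It therefore suffices to establish the special case $\E[VV^\T] = \tfrac{1}{d+2}I$ for the unit ball, after which the general claim follows by multiplying on both sides by $\Sigma^{1/2}$.

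For the unit-ball computation, I would first argue that $\E[VV^\T] = c\,I$ for some scalar $c \geq 0$. The uniform distribution on $B$ is invariant under every orthogonal transformation $O$, i.e. $OV$ has the same law as $V$, so $O\,\E[VV^\T]\,O^\T = \E[VV^\T]$ for all orthogonal $O$; a symmetric matrix commuting with the full orthogonal group must be a multiple of the identity, which gives the claimed form. To pin down $c$, I would take traces: $d\,c = \tr(\E[VV^\T]) = \E\|V\|^2$, so that $c = \tfrac{1}{d}\,\E\|V\|^2$.

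It then remains to evaluate $\E\|V\|^2 = \bigl(\int_B \|v\|^2\,dv\bigr)\big/\bigl(\int_B dv\bigr)$. Passing to polar coordinates and letting $\omega_{d-1}$ denote the surface measure of the unit sphere, the numerator is $\omega_{d-1}\int_0^1 r^{d+1}\,dr = \omega_{d-1}/(d+2)$ and the denominator is $\omega_{d-1}\int_0^1 r^{d-1}\,dr = \omega_{d-1}/d$, so $\E\|V\|^2 = d/(d+2)$. Consequently $c = \tfrac1d \cdot \tfrac{d}{d+2} = \tfrac{1}{d+2}$, yielding $\E[VV^\T] = \tfrac{1}{d+2}I$ and hence $\E\,UU^\T = \tfrac{1}{d+2}\Sigma$. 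There is no serious obstacle here: the only slightly delicate point is the justification that the change of variables preserves uniformity, which is immediate from the constancy of the Jacobian; the radial integrals are routine and the normalizing constant $\omega_{d-1}$ conveniently cancels.
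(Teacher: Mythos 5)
Your proof is correct and takes essentially the same route as the paper's: reduce to the unit ball via the symmetric square root $\Sigma^{1/2}$, invoke rotational symmetry to conclude $\E[VV^\T]=cI$, and determine $c$ from the trace identity $\E\|V\|^2 = d/(d+2)$. The only difference is one of detail---you explicitly verify the push-forward of uniformity (constant Jacobian) and carry out the polar-coordinate integrals that the paper compresses into ``it is easy to see.''
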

\begin{proof}
Since $ U = \sqrt{\Sigma} Y$ where $Y$ is uniform over $x^\T x \leq 1$ it suffices to prove the result for $Y$. By symmetry we must have $\E YY^\T=\alpha I_d$ for some $\alpha >0 $. Moreover, it is easy to see that $ \tr \E YY^\T= \frac{d}{d+2}$. Hence $\alpha = (d+2)^{-1}$ and the result is established.
\end{proof}

\begin{lemma}
\label{lemma:volumelemma}
    Fix $\Sigma_\pi,\Sigma_\rho\in \R^{d\times d}$ with $\Sigma_\pi,\Sigma_\rho \succ 0$. Let $ \pi$ and $\rho$ be uniform distributions over $E_\pi \triangleq \{ x\in \R^d \vert x^\T \Sigma_\pi^{-1} x \leq 1 \}$ and $E_\rho \triangleq\{ x\in \R^d \vert x^\T \Sigma_\rho^{-1} x \leq 1 \}$ respectively. If $E_\rho \subseteq E_\pi$ we have that $\mathrm{d}_{\mathrm{KL}}(\rho, \pi) = \frac{1}{2}\log \frac{\det \Sigma_\pi}{ \det \Sigma_\rho}$.
\end{lemma}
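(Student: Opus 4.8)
The plan is to compute the Kullback--Leibler divergence directly from the densities, exploiting the containment hypothesis to collapse everything to a ratio of volumes. Write $\mathrm{vol}(\cdot)$ for Lebesgue measure on $\R^d$. The uniform laws $\rho$ and $\pi$ have densities $p_\rho(x) = \mathrm{vol}(E_\rho)^{-1} \mathbf{1}\{x \in E_\rho\}$ and $p_\pi(x) = \mathrm{vol}(E_\pi)^{-1} \mathbf{1}\{x \in E_\pi\}$ with respect to Lebesgue measure.

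First I would observe that the hypothesis $E_\rho \subseteq E_\pi$ forces the Radon--Nikodym derivative to be constant on the support of $\rho$: for $x \in E_\rho$ we also have $x \in E_\pi$, so $p_\pi(x) = \mathrm{vol}(E_\pi)^{-1} > 0$ and hence
$$\frac{d\rho}{d\pi}(x) = \frac{p_\rho(x)}{p_\pi(x)} = \frac{\mathrm{vol}(E_\pi)}{\mathrm{vol}(E_\rho)}, \qquad x \in E_\rho.$$
Since this ratio is a constant, integrating its logarithm against $\rho$ (a probability measure supported on $E_\rho$) immediately yields
$$\mathrm{d}_{\mathrm{KL}}(\rho, \pi) = \log \frac{\mathrm{vol}(E_\pi)}{\mathrm{vol}(E_\rho)}.$$
The containment is exactly what guarantees $\rho \ll \pi$, so the divergence is finite; without it the density ratio would blow up on a set of positive $\rho$-mass and the divergence would be $+\infty$.

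It then remains to evaluate the two ellipsoid volumes. Here I would apply the linear change of variables $x = \sqrt{\Sigma}\, y$, which maps the Euclidean unit ball $\{y : \norm{y} \le 1\}$ bijectively onto $E_\Sigma = \{x : x^\T \Sigma^{-1} x \le 1\}$ with constant Jacobian determinant $\det \sqrt{\Sigma} = \sqrt{\det \Sigma}$. Denoting by $\omega_d$ the volume of the unit ball, this gives $\mathrm{vol}(E_\Sigma) = \omega_d \sqrt{\det \Sigma}$. Substituting for both $\Sigma_\pi$ and $\Sigma_\rho$, the dimensional constant $\omega_d$ cancels in the ratio and
$$\log \frac{\mathrm{vol}(E_\pi)}{\mathrm{vol}(E_\rho)} = \log \sqrt{\frac{\det \Sigma_\pi}{\det \Sigma_\rho}} = \frac{1}{2}\log \frac{\det \Sigma_\pi}{\det \Sigma_\rho},$$
which is the claimed identity. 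There is no genuine obstacle in this argument; the only point requiring care is the role of the hypothesis $E_\rho \subseteq E_\pi$, which is precisely what renders the density ratio constant and thereby turns the KL divergence into a pure log-volume ratio rather than an integral depending on the relative geometry of the two ellipsoids.
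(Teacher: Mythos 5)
Your proof is correct and follows essentially the same route as the paper's: both observe that on $E_\rho \subseteq E_\pi$ the density ratio is the constant volume ratio, which by the change of variables $x = \sqrt{\Sigma}\,y$ equals $\sqrt{\det \Sigma_\pi}/\sqrt{\det \Sigma_\rho}$, so the KL integral collapses to $\frac{1}{2}\log(\det \Sigma_\pi/\det \Sigma_\rho)$. You merely spell out the volume computation (the factor $\omega_d\sqrt{\det\Sigma}$) and the absolute-continuity role of the containment hypothesis, which the paper leaves implicit.
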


\begin{proof}
    We have that 
    \begin{equation}
    \begin{aligned}
        \mathrm{d}_{\mathrm{KL}}(\rho, \pi) &= \int_{E_\rho} \log \frac{\rho(x)}{\pi(x)} \mathrm{d}\rho(x) \\
        & =  \int_{E_\rho} \log \frac{\det \sqrt{\Sigma_\pi}}{\det \sqrt{\Sigma_\rho}} \mathrm{d}\rho(x) &&\quad (\textnormal{volume ratio})\\
        & =\log \frac{\det \sqrt{\Sigma_\pi}}{\det \sqrt{\Sigma_\rho}} \int_{E_\rho}  \mathrm{d}\rho(x) = \log \frac{\det \sqrt{\Sigma_\pi}}{\det \sqrt{\Sigma_\rho}} && \quad \left( \int_{E_\rho}  \mathrm{d}\rho(x) = 1\right)
    \end{aligned}
    \end{equation}
    as per requirement.
\end{proof}

\begin{proof}[Proof of \Cref{lem:PACBAYES}]

 By integrating the inequality $
\E[\exp Z(\lambda)] \leq 1$ with respect to $\pi$ and  Fubini:
\begin{equation}
\E\left[\int_{\Lambda} \exp Z(\lambda) \mathrm{d}\pi(\lambda)\right] \leq 1.
\end{equation}
We now change measure using the variational characterization of the relative relative entropy functional \citep{donsker1975asymptotic}, which reads:
\begin{equation}
\log \int_{\Lambda} \exp(Z(\lambda)) \mathrm{d}\pi(\lambda) 
= \sup_{\rho} \left\{\int_{\Lambda} Z(\lambda) \mathrm{d}\rho(\lambda) - \mathrm{d}_{\mathrm{KL}}(\rho, \pi)\right\},
\end{equation}
where the supremum spans over all probability measures $\rho$ over $\Lambda$. Hence
\begin{equation}
\E\left[\exp \sup_{\rho} \left\{\int_{\Lambda} Z(\lambda) \mathrm{d}\rho(\lambda) - \mathrm{d}_{\mathrm{KL}}(\rho, \pi)\right\}\right] \leq 1.
\end{equation}
The result follows by a Chernoff bound applied to $ \left\{\sup_{\rho} \left\{\int_{\Lambda} Z(\lambda) \mathrm{d}\rho(\lambda) - \mathrm{d}_{\mathrm{KL}}(\rho, \pi)\right\}>u \right\}$.
\end{proof}
\section*{Acknowledgments}

The author acknowledges support by a Swedish Research Council international postdoc grant and thanks Bruce Lee  for several stimulating discussions and Alberto Metelli for pointing out a miscalculation in an earlier version of the manuscript.

\addcontentsline{toc}{section}{References}

\bibliographystyle{plainnat}

\bibliography{main.bib}

\end{document}